\newtheorem{theorem}{Theorem}
\theoremstyle{plain}
\newtheorem{corollary}{Corollary}
\newtheorem{definition}{Definition}
\newtheorem{lemma}{Lemma}
\newtheorem{remark}{Remark}
\numberwithin{equation}{section}
\begin{document}
\title[On new fixed point theorems for multivalued mappings ]{On different
type of fixed point theorem for multivalued mappings via measure of
noncompactness }
\author{Nour el Houda Bouzara$^*$ }
\address{Mathematical Faculty, University of Science and Technology Houari
Boumediene, Bab-Ezzouar, 16111, Algiers, Algeria.\\
}
\email{bzr.nour@gmail.com\\
}
\author{Vatan Karakaya}
\curraddr{Department of Mathematical Engineering, Yildiz Technical
University, Davutpasa Campus, Esenler, 34220 Istanbul,Turkey}
\email{vkkaya@yahoo.com}
\subjclass[2010]{47H10; 47H08; 34G25; 34A60 }
\keywords{Fixed point; Measure of noncompactness; Evolution inclusions}

\begin{abstract}
In this paper by using the measure of noncompactness concept, we present new
fixed point theorems for multivalued maps. In further we introduce a new
class of mappings which are general than Meir-Keeler mappings. Finally, we
use these results to investigate the existence of weak solutions to an
Evolution differential inclusion with lack of compactness.
\end{abstract}

\maketitle

\section{Introduction}

Recently many papers have appeared about generalizations of Darbo's fixed
point and its applications. For example, in 2015 Aghajani and mursaleen \cite%
{meirkeeler} introduced the definition of a Meir Keeler condensing operator
and proved a theorem that guarantees the existence of fixed points for
single valued mappings and proved a fixed point theorem which extended the
well-known Darbo's and Meir Keeler fixed point theorems. Another
generalization is due to where they proved the existence of fixed points
under a more general condition than the contraction condition. It is
interesting to see what happened in the multivalued case and whether the
conditions still hold.

Our aim in this paper is to recall some essential concepts and results.
Then, we give a version of a Meir Keeler theorem for condensing multivalued
mappings, we also present some related results and applications. In the
third section, we present a version of theorems presented in \cite%
{meirkeeler} for multivalued mappings and some related results, we also
provide an application for this case.

Finally, in order to indicate their applicability, we choose one among the
previous theorems and we use it to study the existence of mild solutions for
a nonlocal differential evolution inclusion.

\section{Preliminaries}

In this section, we survey some definitions and preliminary facts for
measure of noncompactness and multivalued analysis which will be used in
this paper.

Let $\left( X,d\right) $ and $\left( Y,d^{\prime }\right) $ be two metric
spaces. We use the following notations, 
\begin{equation*}
\mathcal{P}_{cl}(X)=\{A\in \mathcal{P}(X):A\ closed\},\quad \mathcal{P}%
_{b}(X)=\{A\in \mathcal{P}(X):A\ bounded\},
\end{equation*}%
\begin{equation*}
\mathcal{P}_{cv}(A)=\{A\in \mathcal{P}(X):A\ convex\},\quad \mathcal{P}%
_{cp}(X)=\{A\in \mathcal{P}(X):Y\ compact\}.
\end{equation*}%
Consider $H_{d}:\mathcal{P}(X)\times \mathcal{P}(X)\longrightarrow \mathbb{R}%
_{+}\cup \{\infty \}$, given by 
\begin{equation*}
H_{d}(\mathcal{A},\mathcal{B})=\displaystyle\max \left\{ \ \displaystyle%
\sup_{a\in \mathcal{A}}\ d(a,\mathcal{B})\ ,\ \displaystyle\sup_{b\in 
\mathcal{B}}\ d(\mathcal{A},b)\ \right\} ,
\end{equation*}%
where $d(\mathcal{A},b)=\displaystyle\inf_{a\in \mathcal{A}}\ d(a,b)$, $d(a,%
\mathcal{B})=\displaystyle\inf_{b\in \mathcal{B}}\ d(a,b)$. Then $(\mathcal{P%
}_{b,cl}(X),H_{d})$ is a metric space and $(\mathcal{P}_{cl}(X),H_{d})$ is a
generalized (complete) metric space (see \cite{Ki}).

We denote by%
\begin{equation*}
S_{F}\left( y\right) =\{f\in L^{1}(J,X):f(t)\in F(t,y_{\rho (t,y_{t})})\ ,\
a.e.\ t\in J\},
\end{equation*}%
the set of \textit{selectors} of $F$.

$C(E;X)$ is the Banach space of all continuous mappings from $E$ into $X$
with the norm 
\begin{equation*}
\Vert y\Vert =\sup \ \{\ |y(t)|:t\in E\ \}.
\end{equation*}%
$B(X)$ is the space of all bounded linear mappings $T$ from $X$ into $X$,
with the norm 
\begin{equation*}
\Vert F\Vert _{B(X)}=\sup \ \{\ |F(y)|\ :\ |y|=1\ \}.
\end{equation*}%
A multivalued map $F:X\rightarrow \mathcal{P}(X)$ has a fixed point if there
exists $x\in X$ such that $x\in F(x)$.

A multivalued map $F:X\rightarrow \mathcal{P}(X)$ is said to be convex
(closed)\ valued if $F\left( x\right) $ is convex (closed) in $Y$ for each
set $A\ $of $X$ and $F$ is bounded valued if $F\left( x\right) $ is bounded
in $Y$ for each $x\in X$, i,e 
\begin{equation*}
\sup_{x\in A}\left\{ \sup \{\ \Vert y\Vert \ :\ y\in F(x)\}\right\} <\infty .
\end{equation*}%
In further, $F$ is compact if $F(A)$ is relatively compact for every $B\in 
\mathcal{P}_{b}(X)$. Finally, F is upper semi-continuous (u.s.c.) on $X$ if
for each $x_{0}\in X$ the set $F(x_{0})$ is a nonempty, closed subset of $X$%
, and if for each open subset $U$ of $X$ containing $F(x_{0})$, there exists
an open neighborhood $V$ of $x_{0}$ such that $F\left( V\right) \subseteq U$.

\begin{lemma}
Assume that $D\subset X$ and $Fx$ is closed for all $x\in D$, then the
following conclusions hold,

\begin{enumerate}
\item[i)] if $F$ is u.s.c. and $D$ is closed, then $F$ has a closed graph
(i.e., $x_{n}\rightarrow x$ and $y_{n}\rightarrow y$ s.t $y_{n}\in F(x_{n})$ 
$\Rightarrow $ $y\in F(x)$).

\item[ii)] if $F\left( D\right) $ is compact and $D$ is closed, then $F$ is
u.s.c. if and only if $F$ has a closed graph.
\end{enumerate}
\end{lemma}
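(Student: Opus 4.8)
The plan is to treat the two parts separately, deriving (i) directly from upper semicontinuity and then invoking (i) together with the compactness hypothesis to obtain the nontrivial implication in (ii).

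For part (i) I would argue by contradiction. Suppose $x_{n}\to x$ and $y_{n}\to y$ with $y_{n}\in F(x_{n})$ but $y\notin F(x)$. Since each $x_{n}\in D$ and $D$ is closed we have $x\in D$, so $F(x)$ is defined and closed; hence $\delta := d(y,F(x))>0$. The set $U:=\{z\in X:d(z,F(x))<\delta/2\}$ is then open, contains $F(x)$, and satisfies $y\notin U$. Applying upper semicontinuity at $x$ to this $U$ yields an open neighborhood $V$ of $x$ with $F(V)\subseteq U$; since $x_{n}\to x$ we have $x_{n}\in V$ for all large $n$, and therefore $y_{n}\in F(x_{n})\subseteq U$, i.e. $d(y_{n},F(x))<\delta/2$. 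Because the map $z\mapsto d(z,F(x))$ is continuous and $y_{n}\to y$, this forces $d(y,F(x))\le \delta/2<\delta$, contradicting the definition of $\delta$. Thus $y\in F(x)$ and $F$ has a closed graph.

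For part (ii), the implication ``u.s.c. $\Rightarrow$ closed graph'' is precisely part (i), whose hypotheses ($D$ closed and $Fx$ closed for all $x\in D$) are exactly those assumed here. So the substance lies in the converse. Assume $F$ has a closed graph and $F(D)$ is compact, and suppose toward a contradiction that $F$ fails to be u.s.c. at some $x_{0}\in D$. Then there is an open set $U\supseteq F(x_{0})$ such that no neighborhood of $x_{0}$ is carried into $U$; taking the neighborhoods $B(x_{0},1/n)$ produces points $x_{n}\to x_{0}$ together with $y_{n}\in F(x_{n})$ satisfying $y_{n}\notin U$.

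Now compactness does the work. Since $\{y_{n}\}\subseteq F(D)$ and $F(D)$ is compact, some subsequence $y_{n_{k}}$ converges to a point $y\in F(D)$. Each $y_{n_{k}}$ lies in the closed set $X\setminus U$, so the limit obeys $y\notin U$. On the other hand $x_{n_{k}}\to x_{0}$ and $y_{n_{k}}\in F(x_{n_{k}})$, so the closed graph hypothesis gives $y\in F(x_{0})\subseteq U$, a contradiction. Hence $F$ is u.s.c. I expect this converse to be the main obstacle, the delicate point being the correct extraction of the witnessing sequences $(x_{n},y_{n})$ from the negation of upper semicontinuity; once compactness supplies a convergent subsequence, the closed graph hypothesis closes the argument at once.
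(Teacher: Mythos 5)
The paper states this lemma without proof: it appears in the preliminaries as a known fact from multivalued analysis, with the reader referred to the books of Deimling and G\'{o}rniewicz, so there is no proof in the paper to compare yours against. Your argument is a correct and complete proof of the standard result. Part (i) is handled properly: closedness of $D$ puts the limit $x$ in $D$, closedness (and, per the paper's definition of u.s.c., nonemptiness) of $F(x)$ makes $\delta=d(y,F(x))$ a positive number when $y\notin F(x)$, and the open $\delta/2$-neighborhood of $F(x)$ together with continuity of $z\mapsto d(z,F(x))$ yields the contradiction. In part (ii) the forward implication is indeed just (i), and your converse correctly extracts the witnesses $x_n\in B(x_0,1/n)\cap D$, $y_n\in F(x_n)\setminus U$ from the negation of upper semicontinuity, uses sequential compactness of $F(D)$ (valid since $X$ is a Banach, hence metric, space) to get $y_{n_k}\to y\in X\setminus U$, and then contradicts $y\in F(x_0)\subseteq U$ via the closed graph. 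The only point worth making explicit is that the neighborhoods of $x_0$ should be taken relative to $D$ so that $F(x_n)$ is defined and $y_n\in F(D)$; with that understood, the proof is sound.
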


For more details on multivalued maps we refer to the books of Deimling \cite%
{De}, G\'{o}rniewicz \cite{Go}.

\begin{definition}
\cite{BaGo} Let $X$ be a Banach space and $\mathcal{B}_{X}$ the family of
bounded subset of $X.$ A map%
\begin{equation*}
\mu :\mathcal{B}_{X}\rightarrow \left[ 0,\infty \right)
\end{equation*}%
is called measure of noncompactness $\left( \text{MNC}\right) $ defined on $%
X $ if it satisfies the following

\begin{enumerate}
\item $\mu \left( A\right) =0\Leftrightarrow A$ is a precompact set.

\item $A\subset B\Rightarrow \mu \left( A\right) \leqslant \mu \left(
B\right) .$

\item $\mu \left( A\right) =\mu \left( \overline{A}\right) ,$ $\forall A\in 
\mathcal{B}_{X}.$

\item $\mu \left( ConvA\right) =\mu \left( A\right) .$

\item $\mu \left( \lambda A+\left( 1-\lambda \right) B\right) \leqslant
\lambda \mu \left( A\right) +\left( 1-\lambda \right) \mu \left( B\right) ,$
for $\lambda \in \left[ 0,1\right] .$

\item Let $\left( A_{n}\right) $ be a sequence of closed sets from $\mathcal{%
B}_{X}$\ such that $A_{n+1}\subseteq A_{n},$ $\left( n\geqslant 1\right) $
and $\lim\limits_{n\rightarrow \infty }\mu \left( A_{n}\right) =0.$ Then the
intersection set $A_{\infty }=\dbigcap\limits_{n=1}^{\infty }A_{n}$ is
nonempty and $A_{\infty }$ is precompact.
\end{enumerate}
\end{definition}

Let $\mu _{0}$ be the sequential measure of noncompactness generated by $\mu 
$, that is, for any bounded subset $A\subset X$, then%
\begin{equation*}
\mu _{0}\left( A\right) =\sup \left\{ \mu \left( x_{n}\right) _{n=1}^{\infty
}\text{ is a sequence in }A\right\} \text{.}
\end{equation*}%
The relation between $\mu _{0}$ and $\mu $ is given by the following
inequalities%
\begin{equation*}
\mu _{0}\left( A\right) \leqslant \mu \left( A\right) \leqslant 2\mu
_{0}\left( A\right) .
\end{equation*}%
However, if $X$ is a separable space then $\mu _{0}\left( A\right) =\mu
\left( A\right) .$

\begin{lemma}[\protect\cite{KaObZec}]
\label{equimnclem}%
\begin{equation*}
\end{equation*}

\begin{enumerate}
\item Let $A\subseteq C\left( E;X\right) $ is bounded, then $\mu \left(
A\left( t\right) \right) \leqslant \mu \left( A\right) $ for all $t\in E$,
where $A\left( t\right) =\left\{ y\left( t\right) ,y\in A\right\} \subset X$%
. Furthermore, if $A$ is equicontinuous on $E$, then $\mu \left( A\left(
t\right) \right) $ is continuous on $E$ and $\mu \left( A\right) =\sup
\left\{ \mu \left( A\left( t\right) \right) ,t\in E\right\} .$

\item If $A\subset C\left( E;X\right) $ is bounded and equicontinuous, then%
\begin{equation*}
\mu \left( \int_{0}^{t}A\left( s\right) ds\right) \leqslant \int_{0}^{t}\mu
\left( A\left( s\right) \right) ds,
\end{equation*}%
for all $t\in E,$ where $\int_{0}^{t}A\left( s\right) ds=\left\{
\int_{0}^{t}x\left( s\right) ds:x\in A\right\} $.
\end{enumerate}
\end{lemma}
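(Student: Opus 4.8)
The plan is to carry out the whole argument with the Hausdorff measure of noncompactness, i.e. with finite $\varepsilon$-nets, and to invoke three standard properties of it beyond the listed axioms: monotonicity under the nonexpansive evaluation maps, the Lipschitz estimate $|\mu(B) - \mu(C)| \le H_d(B, C)$, and semi-additivity together with positive homogeneity, $\mu(B + C) \le \mu(B) + \mu(C)$ and $\mu(\lambda B) = \lambda \mu(B)$ for $\lambda \ge 0$. For the first assertion of (1) I would fix $t \in E$ and consider the evaluation map $e_t : C(E;X) \to X$, $e_t(y) = y(t)$, which is nonexpansive since $\|y_1(t) - y_2(t)\| \le \|y_1 - y_2\|$. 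Hence any finite $\varepsilon$-net $\{y_1, \dots, y_n\}$ of $A$ in $C(E;X)$ is carried to the finite $\varepsilon$-net $\{y_1(t), \dots, y_n(t)\}$ of $A(t) = e_t(A)$ in $X$, which gives $\mu(A(t)) \le \mu(A)$ and, on taking the supremum over $t$, the inequality $\sup_{t \in E} \mu(A(t)) \le \mu(A)$.

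For the second assertion of (1) I would first prove continuity of $t \mapsto \mu(A(t))$ from equicontinuity: given $\varepsilon$, equicontinuity yields $\delta$ with $\|y(t) - y(s)\| \le \varepsilon$ for all $y \in A$ whenever $|t - s| < \delta$, so $H_d(A(t), A(s)) \le \varepsilon$, and the Lipschitz estimate $|\mu(A(t)) - \mu(A(s))| \le H_d(A(t), A(s))$ gives continuity (and, $E$ being compact, the supremum is attained). The reverse inequality $\mu(A) \le \sup_t \mu(A(t)) =: r$ is the substantive step. Here I would fix $\varepsilon > 0$, choose by equicontinuity a partition $0 = t_0 < \cdots < t_m$ of $E$ whose mesh forces the oscillation of every $y \in A$ on each subinterval to be at most $\varepsilon$, and introduce the piecewise-linear interpolation operator $P$ sending $y$ to the polygonal path through the nodes $y(t_i)$. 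Then $\|y - Py\| \le \varepsilon$ on $A$ by convexity of the norm, so $\mu(A) \le \mu(PA) + \varepsilon$; since $P$ factors through the isometry $X^{m+1} \to C(E;X)$ (with the max norm on the product) determined by the node values, one obtains $\mu(PA) = \mu(\{(y(t_0), \dots, y(t_m)) : y \in A\}) \le \max_i \mu(A(t_i)) \le r$, and letting $\varepsilon \to 0$ yields $\mu(A) \le r$.

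For (2) I would approximate the set-valued integral by Riemann sums. Writing $\phi(s) = \mu(A(s))$, which is continuous by (1), I fix $\varepsilon > 0$ and a partition $0 = s_0 < \cdots < s_n = t$ with tags $\tau_i \in [s_{i-1}, s_i]$; equicontinuity gives, uniformly in $x \in A$, the bound $\|\int_0^t x(s)\,ds - \sum_i (s_i - s_{i-1}) x(\tau_i)\| \le \varepsilon$ once the mesh is small. Hence the sets $\int_0^t A(s)\,ds$ and $\sum_i (s_i - s_{i-1}) A(\tau_i)$ lie within Hausdorff distance $\varepsilon$, and the Lipschitz, semi-additivity and homogeneity properties give
\[
\mu\Big(\int_0^t A(s)\,ds\Big) \le \sum_{i} (s_i - s_{i-1}) \mu(A(\tau_i)) + \varepsilon = \sum_i (s_i - s_{i-1}) \phi(\tau_i) + \varepsilon .
\]
The right-hand sum is a Riemann sum of the continuous $\phi$, so refining the partition and sending $\varepsilon \to 0$ produces $\mu(\int_0^t A(s)\,ds) \le \int_0^t \mu(A(s))\,ds$.

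The main obstacle I anticipate is the nontrivial inequality $\mu(A) \le \sup_t \mu(A(t))$ in part (1): it is precisely here that equicontinuity is indispensable, and the interpolation operator $P$ must be controlled in two ways at once — uniformly approximating every $y \in A$ in the sup norm, and reducing the measure of $PA$ to that of the finitely many fibre sets $A(t_i)$ through the isometric identification with a finite product. Verifying that the product measure equals the maximum of the coordinate measures (for the max norm) and that $P$ is genuinely an isometry onto the polygonal functions are the delicate points; the remaining estimates in (1) and all of (2) are then routine approximation arguments resting on the semi-additivity, homogeneity and Hausdorff-Lipschitz continuity of $\mu$.
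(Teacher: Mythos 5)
The paper offers no proof of this lemma: it is quoted from the cited monograph of Kamenskii, Obukhovskii and Zecca, so there is no internal argument to compare yours against. That said, your proposal is essentially the classical proof (Ambrosetti's lemma and its integral version) found in that reference and in Bana\'{s}--Goebel, and it is sound: the evaluation maps $e_t$ are nonexpansive, so nets push forward and $\mu(A(t))\leqslant\mu(A)$; equicontinuity plus the Lipschitz estimate $|\mu(B)-\mu(C)|\leqslant H_{d}(B,C)$ gives continuity of $t\mapsto\mu(A(t))$; the piecewise-linear interpolation operator, which you correctly identify as the delicate step, uniformly $\varepsilon$-approximates $A$ while factoring through the max-norm product $X^{m+1}$, whose Hausdorff measure is the maximum of the coordinate measures, yielding $\mu(A)\leqslant\sup_t\mu(A(t))$; and the Riemann-sum approximation, which is uniform over $A$ precisely because of equicontinuity, combined with semi-additivity and positive homogeneity gives part (2). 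The one caveat worth recording is that the lemma as printed is stated for an unspecified measure of noncompactness $\mu$, whereas your proof (like the original) invokes properties --- $\mu(B+C)\leqslant\mu(B)+\mu(C)$, $\mu(\lambda B)=|\lambda|\,\mu(B)$, and Lipschitz continuity with respect to $H_{d}$ --- that are not among the six axioms of the paper's Definition of an MNC and do not follow from them. So what you actually establish is the statement for the Hausdorff measure of noncompactness (or any MNC with these additional regularity properties), which is the setting in which the cited source states the result and the setting in which it is used later in the paper.
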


\begin{lemma}[\protect\cite{Thi}]
\label{graph}Let $X$ be a Banach space and $F$ a Caratheodory multivalued
mapping. Let $\Phi :L^{1}\left( E;X\right) \rightarrow C\left( E;X\right) $
be linear continuous mapping. Then, 
\begin{eqnarray*}
\Phi \circ S_{F}:C\left( E;X\right) &\rightarrow &\mathcal{P}_{cl;c}\left(
C\left( E;X\right) \right) \\
u &\rightarrow &\left( \Phi \circ S_{F}\right) u:=\Phi \left( S_{F}\left(
u\right) \right) ,
\end{eqnarray*}%
is a closed graph operator in $C\left( E;X\right) \times C\left( E;X\right) $%
.
\end{lemma}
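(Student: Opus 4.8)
The plan is to verify the closed graph property straight from its definition. I would take sequences $u_n \to u_*$ and $w_n \to w_*$ in $C(E;X)$ with $w_n \in (\Phi \circ S_F)(u_n)$ and aim to show $w_* \in (\Phi \circ S_F)(u_*)$. By definition of the composition, each $w_n$ comes from a selector: there exist $f_n \in S_F(u_n)$, i.e. $f_n(t) \in F(t,(u_n)_{\rho(t,(u_n)_t)})$ for a.e. $t$, with $w_n = \Phi(f_n)$. The objective is then to produce a single $f \in S_F(u_*)$ satisfying $\Phi(f) = w_*$.

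First I would exploit the Carath\'eodory hypothesis on $F$. Since $u_n \to u_*$ uniformly, the family $\{u_n\}$ is norm bounded, so the integrable growth bound attached to the Carath\'eodory condition furnishes $\phi \in L^1(E)$ with $\|f_n(t)\| \le \phi(t)$ a.e. for all $n$. Hence $\{f_n\}$ is bounded and uniformly integrable in $L^1(E;X)$; combined with the compact convex values of $F$, the Dunford--Pettis criterion yields relative weak compactness, so I can extract a subsequence (not relabelled) with $f_n \rightharpoonup f$ weakly in $L^1(E;X)$. I would then pass $\Phi$ to the limit: because a bounded linear operator is automatically continuous for the weak topologies, $w_n = \Phi(f_n) \rightharpoonup \Phi(f)$ in $C(E;X)$, while $w_n \to w_*$ strongly hence weakly, so uniqueness of weak limits forces $\Phi(f) = w_*$. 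This settles the value of the operator and reduces the whole problem to the membership $f \in S_F(u_*)$.

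The main obstacle is precisely this last membership, since weak $L^1$-convergence does not respect the pointwise inclusion constraint. Here I would invoke Mazur's lemma on the weakly convergent sequence to obtain convex combinations $g_m = \sum_{i \ge m} \lambda_i^m f_i$ converging strongly in $L^1$, and therefore (along a further subsequence) pointwise a.e., to $f$. Each $g_m(t)$ lies in $\overline{\mathrm{conv}}\,\bigcup_{i \ge m} F(t,(u_i)_{\rho(t,(u_i)_t)})$, so by the upper semicontinuity of $F(t,\cdot)$, its closed convex values, and $u_n \to u_*$, intersecting over $m$ pins down $f(t) \in F(t,(u_*)_{\rho(t,(u_*)_t)})$ for a.e. $t$. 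Thus $f \in S_F(u_*)$ and $w_* = \Phi(f) \in (\Phi \circ S_F)(u_*)$, which is exactly the closed graph property. The fact that the composition is $\mathcal{P}_{cl,c}$-valued comes for free from the linearity of $\Phi$ together with the closed convex structure of $S_F(u)$, so I would treat it as a routine remark rather than a separate argument.
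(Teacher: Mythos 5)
The paper does not prove this lemma at all: it is quoted verbatim from the literature (the citation is to Thieme, though the result is usually attributed to Lasota and Opial) and used as a black box in the application section, so there is no in-paper argument to compare against. Your proof is the standard one for this lemma and its skeleton is sound: Dunford--Pettis to extract a weakly convergent subsequence $f_n \rightharpoonup f$ in $L^1$, weak-weak continuity of the bounded linear operator $\Phi$ to identify $\Phi(f)=w_*$, and Mazur's lemma plus upper semicontinuity and closed convex values to recover the a.e.\ inclusion $f(t)\in F(t,u_*(t))$. Two points deserve care. First, the integrable growth bound you invoke (``$\|f_n(t)\|\le\phi(t)$ with $\phi\in L^1$'') is \emph{not} part of the paper's stated definition of an upper Carath\'eodory map, which only asks for upper semicontinuity in $x$ and measurability in $t$; without an $L^1$-Carath\'eodory growth condition the uniform integrability step has no basis, so you should state it as an explicit hypothesis (it is implicit in the intended version of the lemma). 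Second, in an infinite-dimensional $X$ uniform integrability alone does not give relative weak compactness in $L^1(E;X)$; one also needs a.e.\ relative (weak) compactness of $\{f_n(t)\}_n$, which you correctly source from the compact values of $F$ together with $u_n\to u_*$ and upper semicontinuity --- it would be worth saying explicitly that you are using Diestel's vector-valued version of Dunford--Pettis rather than the scalar one. With those two clarifications the argument is complete, and it in fact supplies a proof the paper omits.
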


\begin{definition}[\protect\cite{akhmerov}]
A multivalued map $F:X\rightarrow \mathcal{P}(X)$ is called $k$-set
contraction multivalued mapping if there exists a constant $k,$ $0\leqslant
k<1$ such that%
\begin{equation*}
\mu (FA)\leqslant k\mu \left( A\right) \text{ for any bounded }A\subset X.
\end{equation*}%
If $\mu (FA)<k\mu \left( A\right) ,$ then $F$ is a condensing multivalued
mapping.
\end{definition}

\begin{theorem}
\label{condensing}Let $A$ be a closed convex and bounded subset of a Banach
space $X$ and let $F:A\rightarrow \mathcal{P}_{cl,cv}\left( A\right) $ be an
upper semi-continuous and condensing multivalued mapping. Then, $F$ has a
fixed point point.
\end{theorem}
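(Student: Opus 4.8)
The plan is to run the classical Sadovskii reduction: I would use the condensing hypothesis together with the measure of noncompactness $\mu$ to cut $A$ down to a \emph{nonempty compact convex} subset $B$ that is still invariant under $F$, and then finish with a known fixed point theorem for upper semi-continuous, convex- and compact-valued maps on a compact convex set (the Bohnenblust--Karlin / Kakutani--Fan--Glicksberg theorem).

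First I would fix any $x_0\in A$ and let $\Sigma$ be the family of all closed convex sets $C$ with $\{x_0\}\subseteq C\subseteq A$ and $F(C)\subseteq C$. Since $F(A)\subseteq A$ and $A$ is closed and convex, $A\in\Sigma$, so $\Sigma\neq\emptyset$. Put $B=\bigcap_{C\in\Sigma}C$. Then $B$ is closed and convex, contains $x_0$, and for every $C\in\Sigma$ one has $F(B)\subseteq F(C)\subseteq C$, whence $F(B)\subseteq B$; thus $B$ is the smallest member of $\Sigma$. Next I would establish the self-reproducing identity $B=\overline{\mathrm{conv}}\bigl(F(B)\cup\{x_0\}\bigr)$. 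Writing $B_1:=\overline{\mathrm{conv}}\bigl(F(B)\cup\{x_0\}\bigr)$, the inclusion $B_1\subseteq B$ holds because $B$ is closed and convex and contains both $x_0$ and $F(B)$; conversely $B_1\subseteq B$ gives $F(B_1)\subseteq F(B)\subseteq B_1$, and together with $x_0\in B_1$ and $B_1\subseteq A$ this yields $B_1\in\Sigma$, so $B\subseteq B_1$ by minimality, and hence $B=B_1$.

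Now I would compute $\mu(B)$. Using invariance of $\mu$ under closure and convex hull (properties (3) and (4)) and the fact that adjoining the single precompact point $x_0$ does not change $\mu$, I get $\mu(B)=\mu\bigl(F(B)\cup\{x_0\}\bigr)=\mu\bigl(F(B)\bigr)$. If $\mu(B)>0$, this contradicts the condensing inequality $\mu\bigl(F(B)\bigr)<\mu(B)$; therefore $\mu(B)=0$. By property (1), $B$ is precompact, and being closed it is in fact compact, and it is nonempty since $x_0\in B$. On the compact convex set $B$ the restriction $F\colon B\to\mathcal{P}_{cl,cv}(B)$ is upper semi-continuous with convex values that are closed subsets of the compact set $B$, hence compact; equivalently, since $F(B)\subseteq B$ is relatively compact, $F$ has closed graph by the earlier lemma relating upper semicontinuity and closed graph (part (ii)). Applying the Bohnenblust--Karlin fixed point theorem to $F$ on $B$ produces $x\in B$ with $x\in F(x)$, the desired fixed point.

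The main obstacle is the middle step: extracting the self-reproducing identity $B=\overline{\mathrm{conv}}\bigl(F(B)\cup\{x_0\}\bigr)$ and then converting the strict condensing inequality into the equality $\mu(B)=\mu\bigl(F(B)\bigr)$ that forces $\mu(B)=0$. The other essential ingredient is the classical fixed point theorem for convex- and compact-valued u.s.c.\ maps on a compact convex set, which is not contained in the excerpt and would have to be invoked as a known result.
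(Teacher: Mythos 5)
The paper does not prove this statement at all: Theorem~\ref{condensing} sits in the preliminaries as a quoted background result (it is the multivalued Sadovskii/Darbo-type theorem from the condensing-operator literature, cf.\ \cite{akhmerov,KaObZec}), and is then simply invoked in the proof of Theorem~\ref{meir multi}. So there is nothing in the paper to match your argument against; what you have written is a genuine proof where the paper supplies none, and it is the standard Sadovskii kernel construction: the minimal closed convex $F$-invariant set $B$ containing $x_0$, the identity $B=\overline{\mathrm{conv}}\bigl(F(B)\cup\{x_0\}\bigr)$, the conclusion $\mu(B)=0$ from the condensing inequality, and then Kakutani--Fan on the compact convex set $B$. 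The logic is sound. Two remarks. First, the step $\mu\bigl(F(B)\cup\{x_0\}\bigr)=\mu\bigl(F(B)\bigr)$ uses the fact that adjoining a one-point (compact) set does not change the measure of noncompactness; this holds for the Kuratowski and Hausdorff measures and is usually part of the axiom list, but it is \emph{not} derivable from properties (1)--(6) of the paper's Definition of MNC, so strictly speaking you are using one more axiom than the paper grants you (monotonicity only gives $\mu(F(B))\leqslant\mu(B)$, which is the wrong direction). Second, the final ingredient you say ``is not contained in the excerpt'' actually is: the Kakutani--Fan theorem for u.s.c.\ maps with closed convex values on a compact convex set is stated explicitly in the paper's preliminaries, so your proof closes entirely within the paper's toolkit once the union-with-a-point property is added to the MNC axioms.
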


The following results is due to Dhage \cite{dhage}.

\begin{theorem}
Let $A$ be a closed convex and bounded subset of a Banach space $X$ and let $%
F:A\rightarrow \mathcal{P}_{cl,cv}\left( A\right) $ be an upper
semi-continuous multivalued mapping such that%
\begin{equation*}
\mu (FW)\leqslant \varphi \left( \mu \left( W\right) \right) \text{ for any
bounded }W\subset A,
\end{equation*}%
where $\varphi :\mathbb{R}_{+}\rightarrow \mathbb{R}_{+}$ is a continuous
nondecreasing function that satisfies $\varphi \left( t\right) <t$.

Then, $F$ has a fixed point point and the set of fixed points is compact .
\end{theorem}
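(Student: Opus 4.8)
The plan is to mimic the classical Darbo iteration but with the contractive gauge $\varphi$, reducing the problem to a compact convex set on which Theorem~\ref{condensing} applies. First I would build a nested sequence of closed convex bounded sets by setting $A_{0}=A$ and $A_{n+1}=\overline{\mathrm{Conv}}\,F(A_{n})$. Since $F$ maps $A$ into $\mathcal{P}_{cl,cv}(A)$ and $A$ is closed and convex, each $A_{n}$ is a nonempty closed convex subset of $A$; proving $F(A_{n})\subseteq A_{n}$ by induction then gives $A_{n+1}\subseteq A_{n}$, so the sequence decreases.

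Next I would control the measures $a_{n}:=\mu(A_{n})$. By properties (3) and (4) of the MNC one has $\mu(\overline{\mathrm{Conv}}\,F(A_{n}))=\mu(F(A_{n}))$, so the hypothesis yields $a_{n+1}\leqslant\varphi(a_{n})$. Because $\varphi(t)<t$ for $t>0$, the sequence $(a_{n})$ is nonincreasing and bounded below, hence converges to some $L\geqslant 0$; passing to the limit and using continuity of $\varphi$ gives $L\leqslant\varphi(L)$, which is impossible unless $L=0$. Thus $\mu(A_{n})\to 0$, and property (6) guarantees that $A_{\infty}:=\bigcap_{n}A_{n}$ is nonempty, convex, closed, and precompact, i.e.\ compact and convex.

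The key step is then to verify $F(A_{\infty})\subseteq A_{\infty}$: since $A_{\infty}\subseteq A_{n}$ for every $n$, monotonicity of $F$ gives $F(A_{\infty})\subseteq F(A_{n})\subseteq A_{n+1}$, and intersecting over $n$ yields the claim. Now $F$ restricts to an u.s.c.\ map $A_{\infty}\rightarrow\mathcal{P}_{cl,cv}(A_{\infty})$ on a compact convex set; as every subset of $A_{\infty}$ has zero measure of noncompactness, $F$ is trivially condensing there, so Theorem~\ref{condensing} produces a point $x\in F(x)$. I expect the main obstacle to be the careful bookkeeping ensuring each $A_{n}$ remains closed, convex, bounded, and $F$-invariant, so that both the MNC axioms and Theorem~\ref{condensing} can legitimately be invoked.

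Finally, to see that the fixed point set $\mathrm{Fix}(F)$ is compact, I would argue as follows. Since $F$ is u.s.c.\ with closed values on the closed set $A$, the first lemma (part (i)) shows $F$ has a closed graph, whence $\mathrm{Fix}(F)$ is closed: if $x_{n}\to x$ with $x_{n}\in F(x_{n})$, the closed graph forces $x\in F(x)$. For precompactness, observe that every fixed point satisfies $x\in F(x)\subseteq F(\mathrm{Fix}(F))$, so $\mathrm{Fix}(F)\subseteq F(\mathrm{Fix}(F))$ and therefore $\mu(\mathrm{Fix}(F))\leqslant\mu(F(\mathrm{Fix}(F)))\leqslant\varphi(\mu(\mathrm{Fix}(F)))$; the inequality $\varphi(t)<t$ then forces $\mu(\mathrm{Fix}(F))=0$. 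Being closed and of zero measure, $\mathrm{Fix}(F)$ is compact.
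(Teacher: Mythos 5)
Your argument is correct, but note that the paper itself offers no proof of this statement: it is quoted as a known result of Dhage (reference \cite{dhage}), so there is nothing internal to compare against. That said, your route is exactly the technique the paper deploys for its own theorems in the later sections (the Caristi-type theorem and Theorem \ref{generali}): iterate $A_{n+1}=\overline{\mathrm{Conv}}\,F(A_{n})$, use the MNC axioms to get $\mu(A_{n+1})\leqslant\varphi(\mu(A_{n}))$, force the limit to zero via $\varphi(t)<t$, invoke property (6) to obtain the compact convex kernel $A_{\infty}$, check $F$-invariance, and finish with a fixed point theorem on $A_{\infty}$. One small improvement: rather than calling $F$ ``trivially condensing'' on $A_{\infty}$ and citing Theorem \ref{condensing} (whose definition of condensing uses a strict inequality $\mu(FA)<k\mu(A)$, which is vacuously problematic when $\mu(A)=0$), it is cleaner to apply the Kakutani--Fan theorem stated in the paper directly to the compact convex set $A_{\infty}$. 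Your argument for compactness of the fixed point set --- closedness via the closed-graph lemma, plus $\mathrm{Fix}(F)\subseteq F(\mathrm{Fix}(F))$ giving $\mu(\mathrm{Fix}(F))\leqslant\varphi(\mu(\mathrm{Fix}(F)))$ and hence $\mu(\mathrm{Fix}(F))=0$ --- is the standard one and is sound.
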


\begin{lemma}
Let $\psi :\mathbb{R}^{+}\rightarrow \mathbb{R}^{+}$ be a nondecreasing and
upper semi-continuous function. Then,

\begin{equation*}
\lim\limits_{n\rightarrow \infty }\psi ^{n}\left( t\right) =0\text{ }for%
\text{ }each\text{ }t>0\Leftrightarrow \ \psi \left( t\right) <t\text{ }for%
\text{ }any\text{ }t>0.
\end{equation*}
\end{lemma}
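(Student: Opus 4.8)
The plan is to prove the two implications separately, writing $\psi^{n}$ for the $n$-fold composition and noting in advance that monotonicity alone suffices for one direction while upper semicontinuity is needed only for the other.

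For the implication ($\Rightarrow$) I would argue by contraposition. Suppose $\psi(t_{0})\geq t_{0}$ for some $t_{0}>0$. Since $\psi$ is nondecreasing, a straightforward induction yields $\psi^{n}(t_{0})\geq t_{0}$ for all $n$: the base case is the hypothesis, and if $\psi^{n}(t_{0})\geq t_{0}$ then $\psi^{n+1}(t_{0})=\psi(\psi^{n}(t_{0}))\geq \psi(t_{0})\geq t_{0}$. Consequently $\lim_{n}\psi^{n}(t_{0})\geq t_{0}>0$, so the limit condition fails. Note that this direction uses only that $\psi$ is nondecreasing, not upper semicontinuity.

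For the converse ($\Leftarrow$), assume $\psi(t)<t$ for every $t>0$ and fix $t>0$. Put $a_{0}=t$ and $a_{n+1}=\psi(a_{n})=\psi^{n+1}(t)$. As long as the terms stay positive, $a_{n+1}=\psi(a_{n})<a_{n}$, so $(a_{n})$ is strictly decreasing and bounded below by $0$; hence it converges to some $L\geq 0$. The crux is to rule out $L>0$, and this is exactly where upper semicontinuity enters: from $a_{n}\to L$ and the upper semicontinuity of $\psi$ one gets $\limsup_{n}\psi(a_{n})\leq \psi(L)$, while $\psi(a_{n})=a_{n+1}\to L$ forces $L\leq \psi(L)$. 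If $L>0$ this contradicts $\psi(L)<L$, so $L=0$ and therefore $\psi^{n}(t)\to 0$.

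The main obstacle I anticipate is precisely this passage to the limit in the ($\Leftarrow$) direction: one must first justify that the monotone orbit converges and then exploit upper semicontinuity (rather than mere continuity) to pin the limit at $0$. A minor technical point is the behaviour of the orbit should some term reach $0$; since $\psi$ maps into $\mathbb{R}^{+}$ and the hypothesis $\psi(s)<s$ constrains only $s>0$, I would observe that once positivity is lost the convergence to $0$ is either immediate (when $\psi(0)=0$) or, after a single step, reduces to the strictly decreasing positive case already handled. Throughout, the only tools required are the monotonicity of $\psi$ and the defining inequality of upper semicontinuity.
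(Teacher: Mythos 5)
Your argument is correct. Note that the paper itself states this lemma without any proof (it is quoted as a known auxiliary fact), so there is no in-text argument to compare against; your write-up in fact supplies the missing justification. Both directions are handled properly: the contrapositive induction $\psi^{n}(t_{0})\geq t_{0}$ for the ($\Rightarrow$) direction uses only monotonicity, and in the ($\Leftarrow$) direction you correctly isolate the one place where upper semicontinuity is indispensable, namely passing from $a_{n}\to L$ and $\psi(a_{n})=a_{n+1}\to L$ to $L\leq\limsup_{n}\psi(a_{n})\leq\psi(L)$, which contradicts $\psi(L)<L$ when $L>0$ (monotonicity alone would only give the harmless inequality $L\geq\psi(L)$). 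One small simplification: the edge case you flag at the end cannot actually occur in its second form, since $\psi$ nondecreasing together with $\psi(t)<t$ for all $t>0$ forces $\psi(0)\leq\inf_{t>0}t=0$, hence $\psi(0)=0$; so if the orbit ever reaches $0$ it stays there and the conclusion is immediate.
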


In what follows,we confine ourselves only to the fixed point theory related
to upper semicontinuous multi-valued mappings in Banach spaces. The first
fixed point theorem in this direction is due to Kakutani-Fan \cite{Fan}
which is as follows.

\begin{theorem}
Let $A$ be a nonempty compact convex subset of a Hausdorff locally convex
topological vector space E, and let $F:A\rightarrow \mathcal{P}%
_{cl,cv}\left( A\right) $ be an upper semicontinuous map. Then, $F$ has a
fixed point.
\end{theorem}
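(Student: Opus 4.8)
The plan is to reduce the infinite-dimensional multivalued problem to the classical finite-dimensional, single-valued Brouwer fixed point theorem by constructing continuous finite-range approximations of $F$, and then to pass to a limit using the upper semicontinuity together with a Hahn--Banach separation argument. The measure-of-noncompactness machinery is not needed here, since compactness of $A$ is assumed outright.

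First I would fix an arbitrary convex symmetric open neighborhood $U$ of $0$ in $E$ and build an approximating map. For each $x\in A$, upper semicontinuity applied to the open set $F(x)+U\supseteq F(x)$ yields an open neighborhood $N_x$ of $x$; shrinking it I may assume $N_x\subseteq x+U$. Since $A$ is compact, finitely many $N_{x_1},\dots,N_{x_n}$ cover $A$. Choosing a continuous partition of unity $\{p_i\}_{i=1}^n$ subordinate to this cover and selections $y_i\in F(x_i)\subseteq A$, I would set $f_U(x)=\sum_{i=1}^n p_i(x)\,y_i$. Because $A$ is convex, $f_U$ maps $A$ into the compact convex set $C=\mathrm{conv}\{y_1,\dots,y_n\}\subseteq A$, which lies in a finite-dimensional subspace; thus $f_U|_C:C\to C$ is continuous and Brouwer's theorem provides a fixed point $x_U=\sum_{i}p_i(x_U)\,y_i$. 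For every active index $i$ (i.e. $p_i(x_U)>0$) one has $x_U\in N_{x_i}\subseteq x_i+U$, hence $x_i\in x_U+U$ by symmetry, while $y_i\in F(x_i)$.

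Next I would extract a limit. The family of all such $U$, directed by reverse inclusion, gives a net $(x_U)$ in the compact set $A$, so some subnet converges to a point $x^{*}\in A$; along this subnet one simultaneously has $x_U\to x^{*}$ and $U\to 0$. I claim $x^{*}\in F(x^{*})$. Suppose not. Since $F(x^{*})$ is a closed subset of the compact set $A$ it is compact, and it is convex by hypothesis, so the disjoint compact convex sets $\{x^{*}\}$ and $F(x^{*})$ can be strictly separated: there exist $\ell\in E^{*}$ and $c\in\mathbb{R}$ with $\ell(x^{*})>c>\ell(y)$ for all $y\in F(x^{*})$. Putting $\varepsilon=\ell(x^{*})-c>0$ and $O=\{z\in E:\ell(z)<c+\varepsilon/2\}$, upper semicontinuity at $x^{*}$ gives a neighborhood $W$ of $x^{*}$ with $F(W\cap A)\subseteq O$. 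Along the subnet, for indices far enough out we have $x_U\in W$ and $U$ small enough that $x_U+U\subseteq W$; then every active center satisfies $x_i\in x_U+U\subseteq W$, so $y_i\in F(x_i)\subseteq O$ and $\ell(y_i)<c+\varepsilon/2$. Consequently $\ell(x_U)=\sum_i p_i(x_U)\,\ell(y_i)<c+\varepsilon/2$, whereas continuity of $\ell$ forces $\ell(x_U)\to\ell(x^{*})=c+\varepsilon$, a contradiction. Hence $x^{*}\in F(x^{*})$.

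I expect the main obstacle to be this limiting step: one must arrange the subnet so that the convergence $x_U\to x^{*}$ and the shrinking $U\to 0$ occur together, and ensure that the finitely many (but $U$-dependent) active centers $x_i$ are eventually trapped inside the separating neighborhood $W$. The separation is precisely what converts the purely topological statement $x^{*}\notin F(x^{*})$ into a quantitative inequality that the convex-combination structure of $f_U$ cannot sustain. A minor technical point worth verifying is that $f_U$ genuinely carries the finite-dimensional hull $C$ into itself, which relies on convexity of $A$ and on $y_i\in A$, so that Brouwer's theorem legitimately applies.
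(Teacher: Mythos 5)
Your argument is correct, but there is nothing in the paper to compare it against: the statement is the classical Kakutani--Fan fixed point theorem, which the paper simply quotes with a citation to Fan's 1952 article and does not prove. What you have written is essentially the standard approximation proof of that classical result: replace $F$ by the continuous single-valued maps $f_U(x)=\sum_i p_i(x)y_i$ built from a partition of unity subordinate to a finite cover of $A$ by sets $N_{x_i}\subseteq x_i+U$, apply Brouwer on the finite-dimensional compact convex hull of the selections, and pass to a limit along the neighborhood filter using Hahn--Banach separation of $\{x^*\}$ from the compact convex set $F(x^*)$. All the delicate points are handled correctly: the symmetry of $U$ to convert $x_U\in x_i+U$ into $x_i\in x_U+U$, the cofinality of the subnet so that $x_U\to x^*$ and $U\to 0$ simultaneously (take $V$ with $x^*+V+V\subseteq W$ to trap $x_U+U$ inside $W$), and the fact that the linear functional $\ell$ turns the convex combination into the contradictory inequality $\ell(x_U)<c+\varepsilon/2$. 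One small redundancy: the invocation of upper semicontinuity at the start, producing $F(N_x)\subseteq F(x)+U$, is never actually used in your argument --- only the inclusion $N_x\subseteq x+U$ and the selections $y_i\in F(x_i)$ matter, since upper semicontinuity enters only once, at the limit point $x^*$ in the separation step. You could delete that first appeal to u.s.c. without loss. The proof legitimately avoids the measure-of-noncompactness machinery, which is indeed irrelevant here since $A$ is assumed compact.
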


\section{Fixed point theorems for multivalued Meir-Keeler set contraction
mappings}

\begin{definition}
A Meir-Keeler condensing multivalued mapping if for each $\delta >0$ there
exists $\varepsilon >0$ such that%
\begin{equation*}
\varepsilon \leqslant \mu \left( A\right) <\varepsilon +\delta \Rightarrow
\mu (FA)<\varepsilon \text{.}
\end{equation*}
\end{definition}

\begin{remark}
The condensing multivalued mappings of Meir-Keeler type are more general
than condensing mappings. Indeed, let $F$ be a condensing mapping, that is, 
\begin{equation*}
\mu (FA)\leqslant k\mu \left( A\right) \text{ for any bounded }A\subset X%
\text{.}
\end{equation*}%
Suppose for $\delta =\left( \frac{1}{k}-1\right) \varepsilon $ that we have $%
\ \varepsilon \leqslant \mu \left( A\right) <\varepsilon +\delta $, then 
\begin{equation*}
\mu (FA)\leqslant k\mu \left( A\right) <\varepsilon +k\left( \frac{1}{k}%
-1\right) \varepsilon =\varepsilon .
\end{equation*}%
Thus, $F$ is a Meir Keeler condensing multivalued mapping.
\end{remark}

\begin{theorem}
\label{meir multi}Let $X$ be a Banach space and $A$ be a nonempty closed,
bounded and convex subset of $X$. Let $F:A\rightarrow \mathcal{P}%
_{cl,cv}\left( A\right) $ be multivalued upper semicontinuous mapping such
that for any bounded $W\subset A$, we have%
\begin{equation*}
\varepsilon \leqslant \mu \left( W\right) <\varepsilon +\delta \Rightarrow
\mu (FW)<\varepsilon \text{.}
\end{equation*}%
Then, $F$ has at least one fixed point in $A.$
\end{theorem}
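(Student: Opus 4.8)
The plan is to reduce to the Kakutani--Fan theorem by carving out of $A$ a \emph{compact} convex $F$-invariant set, using the Meir--Keeler condition to force the measure of noncompactness down to zero along an iterated sequence of sets.

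First I would build the iterated sequence. Set $A_{0}=A$ and, inductively, $A_{n+1}=\overline{\mathrm{Conv}}\,(F(A_{n}))$, the closed convex hull of the image. Since $F(x)\subseteq A$ for every $x$, we have $F(A_{0})\subseteq A$, and as $A$ is closed and convex, $A_{1}\subseteq A_{0}$. If $A_{n}\subseteq A_{n-1}$ then $F(A_{n})\subseteq F(A_{n-1})$ (the image of a union is monotone), hence $A_{n+1}\subseteq A_{n}$; so $(A_{n})$ is a decreasing sequence of nonempty, closed, bounded, convex sets. Invoking axioms (3) and (4) of the measure $\mu$ gives $\mu(A_{n+1})=\mu(\overline{\mathrm{Conv}}\,F(A_{n}))=\mu(F(A_{n}))$.

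Next I would show $\lim_{n\to\infty}\mu(A_{n})=0$. By axiom (2) and the nesting, $(\mu(A_{n}))$ is nonincreasing and bounded below, so it converges to some $L\geqslant 0$. Suppose $L>0$. Applying the Meir--Keeler hypothesis at the threshold $\varepsilon=L$ yields a $\delta>0$ with $L\leqslant\mu(W)<L+\delta\Rightarrow\mu(FW)<L$; since $\mu(A_{n})\downarrow L$ there is an $N$ with $L\leqslant\mu(A_{N})<L+\delta$, whence $\mu(A_{N+1})=\mu(F(A_{N}))<L$, contradicting $\mu(A_{N+1})\geqslant L$. Therefore $L=0$. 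This \emph{Meir--Keeler limiting step} is the part I expect to be the main obstacle, since it is precisely where the non-quantitative Meir--Keeler condition must be squeezed into a genuine convergence, rather than the one-shot estimate $\mu(FA)\leqslant k\mu(A)$ that is available for ordinary condensing maps.

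Finally, with $\mu(A_{n})\to 0$, axiom (6) of $\mu$ gives that $A_{\infty}=\bigcap_{n\geqslant 1}A_{n}$ is nonempty and precompact; being an intersection of closed convex sets it is also closed and convex, hence compact and convex. I would then verify invariance: for $x\in A_{\infty}$ and every $n$, $F(x)\subseteq F(A_{n})\subseteq A_{n+1}$, so $F(x)\subseteq\bigcap_{n}A_{n+1}=A_{\infty}$, i.e. $F(A_{\infty})\subseteq A_{\infty}$. Thus $F$ restricts to an upper semicontinuous map $F:A_{\infty}\to\mathcal{P}_{cl,cv}(A_{\infty})$ on the nonempty compact convex set $A_{\infty}$, and the Kakutani--Fan theorem supplies a point $x\in A_{\infty}\subseteq A$ with $x\in F(x)$, completing the argument.
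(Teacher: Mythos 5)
Your proof is correct, but it follows a genuinely different route from the paper's. The paper's argument is a two-line reduction: taking $\varepsilon=\mu(W)$ in the Meir--Keeler condition gives $\mu(FW)<\mu(W)$ whenever $\mu(W)>0$, and the result is then read off from the quoted multivalued condensing fixed point theorem (Theorem \ref{condensing}), used as a black box. You instead give a self-contained proof: the iterated closed convex hulls $A_{n+1}=\overline{\mathrm{Conv}}(F(A_n))$, the limiting argument showing $\mu(A_n)\downarrow 0$, the compact convex invariant intersection $A_\infty$, and Kakutani--Fan. Each approach buys something. The paper's is shorter, but it silently relies on reading ``condensing'' in the Sadovskii sense $\mu(FW)<\mu(W)$; under the paper's own stated definition (a uniform $k<1$ with $\mu(FW)<k\mu(W)$) the reduction would not be valid, since the Meir--Keeler condition does not produce such a $k$. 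Your argument avoids that ambiguity entirely and, moreover, isolates exactly where the Meir--Keeler hypothesis earns its keep: the bare inequality $\mu(FW)<\mu(W)$ would leave the limit $L=\lim\mu(A_n)$ possibly positive (which is why Sadovskii-type theorems otherwise need a transfinite construction), whereas your contradiction step at the threshold $\varepsilon=L$ forces $L=0$ with only a countable iteration. The one point worth stating explicitly is the quantifier reading of the hypothesis --- you interpret it as ``for each $\varepsilon>0$ there exists $\delta>0$,'' which is the standard Meir--Keeler form and the only reading under which your limiting step (and indeed the theorem) makes sense, but it is not what the paper's Definition literally says.
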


\textbf{Proof. }Obviously, if we have%
\begin{equation*}
\varepsilon \leqslant \mu \left( W\right) <\varepsilon +\delta \Rightarrow
\mu (FW)<\varepsilon \text{,}
\end{equation*}%
then%
\begin{equation*}
\mu (FA)<\mu (A)\text{.}
\end{equation*}%
Thus by Theorem \ref{condensing}, $F$ has at least one fixed point.

\begin{corollary}
Let $X$ be a Banach space and $F:X\rightarrow \mathcal{P}\left( X\right) $
be multivalued mapping with convex values, closed graph and bounded range
such that, for any bounded $A\subset X$, we have%
\begin{equation*}
\mu (FA)\leqslant k\mu \left( A\right) ,\text{ for }0\leqslant k<1\text{.}
\end{equation*}%
Then, $F$ has at least one fixed point in $A.$
\end{corollary}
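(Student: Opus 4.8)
The plan is to reduce the statement to the Kakutani--Fan theorem by manufacturing a nonempty compact convex set that is invariant under $F$, on which the closed graph hypothesis can be upgraded to upper semicontinuity. The corollary as stated gives only a closed graph (not u.s.c.) and works over all of $X$ (not over a bounded set), so both of these deficiencies must be repaired before any of the earlier fixed point theorems can be applied.

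First I would use the bounded range to produce an invariant set. Put $A_{0}=\overline{\mathrm{conv}}\,(F(X))$; since $F(X)$ is bounded, $A_{0}$ is a nonempty closed, bounded, convex subset of $X$, and because $F(x)\subseteq F(X)\subseteq A_{0}$ for every $x$, we get $F(A_{0})\subseteq A_{0}$. The closed graph hypothesis also forces each value $F(x)$ to be closed (take the constant sequence $x_{n}=x$ against $y_{n}\to y$ with $y_{n}\in F(x)$), so $F$ carries $A_{0}$ into $\mathcal{P}_{cl,cv}(A_{0})$.

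Next comes the Darbo-type iteration. Define $A_{n+1}=\overline{\mathrm{conv}}\,(F(A_{n}))$. Writing $F(A)=\bigcup_{x\in A}F(x)$ one checks $A_{1}\subseteq A_{0}$, and monotonicity of the union then gives by induction a nested chain $A_{n+1}\subseteq A_{n}$ of closed, convex, bounded sets. By properties (3) and (4) of the measure of noncompactness, $\mu(A_{n+1})=\mu(F(A_{n}))\leqslant k\,\mu(A_{n})$, so $\mu(A_{n})\leqslant k^{n}\mu(A_{0})\to 0$ as $n\to\infty$, since $0\leqslant k<1$. Property (6) then guarantees that $A_{\infty}=\bigcap_{n\geqslant 1}A_{n}$ is nonempty and precompact; being an intersection of closed convex sets it is also closed and convex, hence compact and convex. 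A short inclusion check ($A_{\infty}\subseteq A_{n}$ gives $F(A_{\infty})\subseteq F(A_{n})$, so $\overline{\mathrm{conv}}\,F(A_{\infty})\subseteq A_{n+1}$ for every $n$) shows $F(A_{\infty})\subseteq A_{\infty}$.

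Finally I would restrict $F$ to $A_{\infty}$. Since $A_{\infty}$ is compact and $F$ maps $A_{\infty}$ into itself, the image $F(A_{\infty})$ is relatively compact, and part (ii) of the first Lemma (the equivalence of closed graph and upper semicontinuity for closed-valued maps with relatively compact image on a compact domain) turns the closed graph hypothesis into upper semicontinuity of $F|_{A_{\infty}}$. Thus $F|_{A_{\infty}}:A_{\infty}\to\mathcal{P}_{cl,cv}(A_{\infty})$ is an upper semicontinuous self-map of a nonempty compact convex set, and the Kakutani--Fan theorem yields a point $x\in A_{\infty}$ with $x\in F(x)$; alternatively, Theorem \ref{condensing} applies since on the compact set $A_{\infty}$ the map is trivially condensing. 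I expect the main obstacle to be exactly this last upgrade from closed graph to upper semicontinuity: the hypotheses supply only a closed graph, and it is the compactness of $A_{\infty}$, produced by the iteration, that renders the two equivalent and makes a fixed point theorem applicable.
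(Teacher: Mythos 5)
Your argument is correct, but it takes a genuinely different and more complete route than the paper, which states this corollary with no proof at all; the intended justification is evidently the one-line reduction ``a $k$-set contraction is a Meir--Keeler condensing multivalued mapping (by the preceding Remark), so Theorem \ref{meir multi} applies.'' That reduction silently ignores two mismatches between the corollary's hypotheses and those of Theorem \ref{meir multi} (equivalently Theorem \ref{condensing}): the domain is all of $X$ rather than a nonempty closed bounded convex set, and only a closed graph rather than upper semicontinuity is assumed. You repair both, and in doing so you bypass the condensing fixed point theorem entirely: the bounded range yields the invariant set $A_{0}=\overline{\mathrm{conv}}\,(F(X))$, the Darbo iteration $A_{n+1}=\overline{\mathrm{conv}}\,(F(A_{n}))$ together with properties (3), (4) and (6) of the measure of noncompactness produces a nonempty compact convex invariant set $A_{\infty}$, and part (ii) of the first Lemma upgrades the closed graph to upper semicontinuity on $A_{\infty}$, after which Kakutani--Fan finishes. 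Your approach buys a self-contained proof that is valid under the hypotheses as literally written, whereas the paper's implicit approach buys brevity at the cost of leaving the construction of the invariant domain and the closed-graph-to-u.s.c.\ upgrade unaddressed. Two minor points: you need each $F(x)$ to be nonempty so that the iterates $A_{n}$ are nonempty (the standard convention for $\mathcal{P}(X)$ here), and Lemma 1(ii) is phrased for $F(D)$ compact while you have only relative compactness of $F(A_{\infty})$ inside the compact set $A_{\infty}$; this is harmless, since the standard form of that equivalence requires only a relatively compact range, but it is worth a sentence.
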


\section{Fixed point theorems for multivalued set contraction mappings of
Caristi type}

\begin{theorem}
Let $X$ be a Banach space and $A$ be a nonempty closed, bounded and convex
subset of $X$. Let $F:A\rightarrow \mathcal{P}_{cl,cv}\left( A\right) $ be
multivalued upper semi-continuous mapping such that for any bounded $%
W\subset A$, we have%
\begin{equation}
\psi \left( \mu \left( FW\right) \right) \leqslant \psi \left( \mu \left(
W\right) \right) -\varphi \left( \mu \left( W\right) \right) ,  \label{cd3.1}
\end{equation}%
where $\mu $ is an arbitrary measure of noncompactness and $\psi ,\varphi :%
\mathbb{R}_{+}\rightarrow \mathbb{R}_{+}$ are given functions such that $%
\varphi $ is lower semi-continuous and $\psi $ is continuous on $\mathbb{R}%
_{+}$. Moreover, $\varphi \left( 0\right) =0$ and $\varphi \left( t\right)
>0 $ for $t>0$. Then T has at least one fixed point in $A$.
\end{theorem}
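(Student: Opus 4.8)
The plan is to reduce this theorem to Theorem~\ref{condensing} by showing that the condition \eqref{cd3.1} forces $F$ to be a condensing multivalued mapping, i.e.\ that $\mu(FW) < \mu(W)$ whenever $\mu(W) > 0$. First I would dispose of the trivial case: if $\mu(W) = 0$ then $W$ is precompact, so $FW$ inherits precompactness from the upper semicontinuity and convex/closed-valuedness of $F$ together with property~(1) of the measure of noncompactness, giving $\mu(FW) = 0$. The substantive case is $\mu(W) > 0$, and here the inequality $\varphi(t) > 0$ for $t > 0$ is the engine.

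The key step is to extract strict decrease from \eqref{cd3.1}. Set $t = \mu(W) > 0$. By hypothesis $\varphi(t) > 0$, so \eqref{cd3.1} gives
\begin{equation*}
\psi\left(\mu(FW)\right) \leqslant \psi\left(\mu(W)\right) - \varphi\left(\mu(W)\right) < \psi\left(\mu(W)\right).
\end{equation*}
To convert this strict inequality on $\psi$-values into $\mu(FW) < \mu(W)$, I would need $\psi$ to be nondecreasing (or at least to have the property that $\psi(a) < \psi(b)$ forces $a < b$). Here I expect the main obstacle: as stated, $\psi$ is only assumed continuous, not monotone, so the implication $\psi(\mu(FW)) < \psi(\mu(W)) \Rightarrow \mu(FW) < \mu(W)$ does not follow in general. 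I would therefore either assume $\psi$ is nondecreasing—matching the hypotheses of the analogous single-valued Caristi-type results this section imitates—or argue by contradiction: if $\mu(FW) \geqslant \mu(W)$, then under monotonicity $\psi(\mu(FW)) \geqslant \psi(\mu(W))$, contradicting the strict inequality above.

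Once $\mu(FW) < \mu(W)$ is established for every bounded $W \subset A$ with $\mu(W) > 0$, combined with the precompact case, $F$ satisfies the definition of a condensing multivalued mapping on $A$. Since $A$ is closed, bounded, and convex and $F : A \rightarrow \mathcal{P}_{cl,cv}(A)$ is upper semicontinuous and condensing, Theorem~\ref{condensing} applies directly and yields a fixed point $x \in A$ with $x \in F(x)$, completing the argument. I would flag in the write-up that the continuity of $\psi$ and lower semicontinuity of $\varphi$ are not actually exploited in this reduction—they would only be needed for a sharper conclusion (such as compactness of the fixed-point set), so the essential hypotheses are really the positivity of $\varphi$ off zero and the monotonicity of $\psi$.
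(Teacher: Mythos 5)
Your reduction to Theorem~\ref{condensing} has a genuine gap that you yourself identify: the step from $\psi\left(\mu\left(FW\right)\right)<\psi\left(\mu\left(W\right)\right)$ to $\mu\left(FW\right)<\mu\left(W\right)$ requires $\psi$ to be nondecreasing (or otherwise order-reflecting), and the theorem as stated assumes only that $\psi$ is continuous. Adding monotonicity of $\psi$ as a hypothesis changes the statement, so as a proof of the theorem actually asserted your argument does not close. Your handling of the case $\mu\left(W\right)=0$ is also shaky --- the values of $F$ are only closed and convex, not compact, so upper semicontinuity alone does not make $FW$ precompact --- though this is harmless since the condensing condition only needs to be checked when $\mu\left(W\right)>0$.

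The paper takes a different route that circumvents the monotonicity issue entirely, and it is worth seeing why. It sets $W_{0}=A$ and $W_{n+1}=\overline{co}\left(FW_{n}\right)$, obtaining a decreasing sequence of nonempty closed bounded convex sets with $\mu\left(W_{n+1}\right)=\mu\left(FW_{n}\right)$. The sequence $\mu\left(W_{n}\right)$ is nonincreasing, hence converges to some $l\geqslant0$. Applying \eqref{cd3.1} along the iteration and passing to the limit --- this is precisely where the continuity of $\psi$ and the lower semicontinuity of $\varphi$ are used, contrary to your closing remark that these hypotheses are idle --- gives $\psi\left(l\right)\leqslant\psi\left(l\right)-\varphi\left(l\right)$, forcing $\varphi\left(l\right)=0$ and hence $l=0$. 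Property (6) of the measure of noncompactness then yields a nonempty compact convex invariant set $W_{\infty}=\bigcap_{n}W_{n}$, on which the Kakutani--Fan theorem produces a fixed point. No comparison between $\mu\left(FW\right)$ and $\mu\left(W\right)$ for a single $W$ is ever needed, which is exactly what lets the paper drop the monotonicity assumption your approach cannot do without. If you want to salvage your shorter argument, you must either add the hypothesis that $\psi$ is nondecreasing or switch to the iterative scheme.
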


\begin{proof}
Define the sequence $W_{0}=W$ and $W_{n+1}=\overline{co}\left( FW_{n}\right) 
$, clearly $\left( W_{n}\right) _{n\in \mathbb{N}}$ is a nonempty closed,
bounded, convex sequence and 
\begin{equation*}
W_{0}\subset W_{1}\subset ...\subset W_{n}\text{.}
\end{equation*}%
Since the sequence $\left( \mu \left( W_{n}\right) \right) _{n\in \mathbb{N}%
} $ is decreasing and bounded below $\left( \text{since }\mu \left(
W_{n}\right) >0\text{, }\forall n\in \mathbb{N}\right) $, then $\left( \mu
\left( W_{n}\right) \right) _{n\in \mathbb{N}}$ is a convergent sequence.
Put $\lim\limits_{n\rightarrow \infty }\mu \left( W_{n}\right) =l.$

In further, using properties of the measure of noncompactness we have,%
\begin{equation*}
\mu \left( W_{n+1}\right) =\mu \left( \overline{co}\left( FW_{n}\right)
\right) =\mu \left( FW_{n}\right) .
\end{equation*}

Then, in view of condition $\left( \text{\ref{cd3.1}}\right) $ we have%
\begin{eqnarray*}
\psi \left( \mu \left( W_{n+1}\right) \right) &=&\psi \left( \mu \left(
FW_{n}\right) \right) \\
&\leqslant &\psi \left( \mu \left( W_{n}\right) \right) -\varphi \left( \mu
\left( W_{n}\right) \right) .
\end{eqnarray*}%
By taking the limit sup we get%
\begin{equation*}
\lim_{n\rightarrow \infty }\sup \psi \left( \mu \left( W_{n+1}\right)
\right) \leqslant \lim_{n\rightarrow \infty }\sup \psi \left( \mu \left(
W_{n}\right) \right) -\lim_{n\rightarrow \infty }\inf \varphi \left( \mu
\left( W_{n}\right) \right) .
\end{equation*}%
Since $\psi $ is continuous and $\varphi $ is lower semi-continuous, we get%
\begin{equation*}
\psi \left( l\right) \leqslant \psi \left( l\right) -\varphi \left( l\right)
.
\end{equation*}%
Fellows that $\varphi \left( l\right) $ must be null, which means that $l=0.$
Thus 
\begin{equation*}
0=\lim_{n\rightarrow \infty }\sup \mu \left( W_{n}\right)
=\lim_{n\rightarrow \infty }\inf \mu \left( W_{n}\right) =\lim_{n\rightarrow
\infty }\mu \left( W_{n}\right) .
\end{equation*}%
Hence, using property 6. of measure of noncompactness we get $W_{\infty
}=\bigcap\limits_{n}W_{n}$ is compact. Then $F$ has at least one fixed point.
\end{proof}

\section{Existence of fixed points for multivalued power set contraction
mappings}

\begin{theorem}
Let $A$ be a nonempty closed, bounded and convex subset of a Banach space $X 
$ and $F:A\rightarrow \mathcal{P}_{cl,cv}\left( A\right) $ be a $k$-set
contraction mapping on $A$. Then, $N^{n}$ (for an integer $n>0$) is a $%
k^{n}- $set contraction on $A$.
\end{theorem}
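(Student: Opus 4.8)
The plan is to argue by induction on $n$; the whole statement reduces to a clean telescoping of the single-step contraction inequality, so the only points worth isolating are the meaning of the multivalued composition and the boundedness of the successive iterates. For a bounded set $W\subset A$ I read $FW=\bigcup_{x\in W}F(x)$ and $F^{m+1}W=F\left(F^{m}W\right)$, so that $F^{n}W$ is simply the $n$-fold image of $W$ under $F$; I also read the $N^{n}$ of the statement as this composition $F^{n}$ (the symbol $N$ appears to be a notational slip for the operator called $F$ in the hypothesis).

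The base case $n=1$ is exactly the hypothesis $\mu(FW)\leqslant k\,\mu(W)$. Before the inductive step I would record the key preliminary fact: since $F$ maps $A$ into $\mathcal{P}_{cl,cv}(A)$, every iterate satisfies $F^{m}W\subseteq A$, and as $A$ is bounded each $F^{m}W$ is a bounded subset of $X$; this is precisely what is needed to apply the defining inequality of a $k$-set contraction at every stage. Assuming $\mu(F^{n}W)\leqslant k^{n}\mu(W)$ for all bounded $W\subset A$, I then apply the one-step inequality to the bounded set $F^{n}W$ and follow with the inductive hypothesis:
\begin{equation*}
\mu\left(F^{n+1}W\right)=\mu\left(F\left(F^{n}W\right)\right)\leqslant k\,\mu\left(F^{n}W\right)\leqslant k\cdot k^{n}\mu(W)=k^{n+1}\mu(W),
\end{equation*}
which closes the induction and exhibits $F^{n}$ as a $k^{n}$-set contraction.

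There is no genuine analytic obstacle here: the argument is purely algebraic once the iterates are known to be bounded. Accordingly the only step requiring care is that boundedness of each $F^{m}W$, which rests entirely on the range condition $F(A)\subseteq A$ together with the boundedness of $A$; I would make this explicit rather than leave it implicit, since it is what legitimizes the repeated use of the contraction inequality. A secondary point of hygiene is to fix the meaning of the iterated multivalued image at the outset, so that the equality $\mu(F^{n+1}W)=\mu(F(F^{n}W))$ in the display is unambiguous.
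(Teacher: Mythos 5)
Your proposal is correct and follows essentially the same route as the paper: both telescope the single-step inequality $\mu(F(F^{m}W))\leqslant k\,\mu(F^{m}W)$ down to $k^{n}\mu(W)$, the paper by unrolling the chain directly and you by formal induction. Your explicit remarks on the boundedness of the iterates and on reading $N$ as $F$ are welcome clarifications but do not change the argument.
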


\begin{proof}
Let $A$ be a nonempty closed, bounded and convex subset of $X$, then for any
bounded bounded $W\subset A$, 
\begin{eqnarray*}
\mu \left( N^{n}W\right) &=&\mu \left( N\left( N^{n-1}W\right) \right) \\
&\leqslant &k\mu \left( N^{n-1}W\right) \\
&\leqslant &k^{2}\mu \left( N^{n-2}W\right) \\
&&\vdots \\
&\leqslant &k^{n}\mu \left( W\right) \text{.}
\end{eqnarray*}%
Since $0\leqslant k<1$, hence $0\leqslant k^{n}<1$ and so $N^{n}$ is also a $%
k-$set contraction mapping.
\end{proof}

\begin{remark}
The inverse is not true that is if $N^{n}$ is a $k$-set contraction mapping
then $N$ could be not a k-set contraction mapping.
\end{remark}

\begin{theorem}
\label{generali}Let $A$ be a nonempty closed, bounded and convex subspace of
a Banach space $X$ and $F:A\rightarrow \mathcal{P}_{cl,cv}\left( A\right) $
be an upper semi-continuous multivalued mapping such that for any $%
n\geqslant 1$ we have $N^{n}\left( conv\left( W\right) \right) \subseteq
conv\left( N^{n}W\right) $ and%
\begin{equation}
\mu \left( N^{n}W\right) \leqslant k_{n}\mu \left( W\right) \text{, for any
bounded }W\subset A.  \label{ineq2}
\end{equation}
where $k_{n}\rightarrow 0,$ $n\rightarrow +\infty $. Then, there exists at
least one $x$ such that $x\in Fx$.
\end{theorem}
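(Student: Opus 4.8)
The plan is to build the standard Darbo--Sadovskii nested sequence and compress the lack of compactness into a compact convex core on which the Kakutani--Fan theorem applies. Writing $F$ for the map denoted $N$ in the statement, I set $A_{0}=A$ and $A_{n+1}=\overline{co}\left( FA_{n}\right) $. Since $F$ is a self-map of $A$ and $A$ is closed and convex, $FA\subseteq A$ gives $A_{1}=\overline{co}\left( FA\right) \subseteq A_{0}$, and monotonicity of $F$ and of $\overline{co}$ under inclusion yields by induction a decreasing chain $A_{0}\supseteq A_{1}\supseteq \cdots $ of nonempty closed bounded convex sets. By construction $F\left( A_{n}\right) \subseteq A_{n+1}$ for every $n$.

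The next step is to control $\mu \left( A_{n}\right) $. The naive recursion $\mu \left( A_{n+1}\right) =\mu \left( \overline{co}\left( FA_{n}\right) \right) =\mu \left( FA_{n}\right) \leqslant k_{1}\mu \left( A_{n}\right) $ is useless when $k_{1}\geqslant 1$, so the iterates must be unfolded. Using the hypothesis $F^{n}\left( conv\left( W\right) \right) \subseteq conv\left( F^{n}W\right) $ together with properties $3$ and $4$ of the measure of noncompactness, an induction on $n$ should yield $A_{n}\subseteq \overline{co}\left( F^{n}A\right) $; hence $\mu \left( A_{n}\right) \leqslant \mu \left( \overline{co}\left( F^{n}A\right) \right) =\mu \left( F^{n}A\right) \leqslant k_{n}\mu \left( A\right) $ by $\left( \ref{ineq2}\right) $. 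Since $k_{n}\rightarrow 0$, this forces $\mu \left( A_{n}\right) \rightarrow 0$.

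Property $6$ of the measure of noncompactness then shows that $A_{\infty }=\bigcap_{n}A_{n}$ is nonempty and precompact; being a nested intersection of closed convex sets it is also closed, hence compact, and convex. Moreover $A_{\infty }$ is invariant: if $x\in A_{\infty }$ then $x\in A_{n}$ for all $n$, so $Fx\subseteq F\left( A_{n}\right) \subseteq A_{n+1}$ for every $n$, whence $Fx\subseteq \bigcap_{n}A_{n+1}=A_{\infty }$. Thus $F$ restricts to an upper semicontinuous map $A_{\infty }\rightarrow \mathcal{P}_{cl,cv}\left( A_{\infty }\right) $ on the nonempty compact convex set $A_{\infty }$, and the Kakutani--Fan theorem produces a point $x\in A_{\infty }\subseteq A$ with $x\in Fx$.

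The main obstacle is precisely the inductive comparison $A_{n}\subseteq \overline{co}\left( F^{n}A\right) $. The sets $A_{n}$ are generated by alternating $F$ with the closed convex hull, whereas the estimate $\left( \ref{ineq2}\right) $ is phrased for the clean iterate $F^{n}A$, so reconciling the two requires pushing $F$ through a closed convex hull, which is exactly the role of the hypothesis $F^{n}\left( conv\left( W\right) \right) \subseteq conv\left( F^{n}W\right) $. Care is needed here because that inclusion is stated for $conv$ rather than $\overline{co}$, so the closure bookkeeping, using $\mu \left( \overline{B}\right) =\mu \left( conv\,B\right) =\mu \left( B\right) $, must be carried through at each step. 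Everything else, namely the monotone nesting, the passage to the limit set, and the final fixed point, is routine once this comparison is secured.
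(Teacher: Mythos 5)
Your proposal is correct and follows essentially the same route as the paper: the same nested sequence $A_{n+1}=\overline{co}\left( FA_{n}\right) $, the same use of the hypothesis $F^{n}\left( conv\left( W\right) \right) \subseteq conv\left( F^{n}W\right) $ together with $\mu \left( \overline{B}\right) =\mu \left( conv\,B\right) =\mu \left( B\right) $ to get $\mu \left( A_{n}\right) \leqslant k_{n}\mu \left( A\right) $, and the same passage to the compact intersection $A_{\infty }$. You are in fact more explicit than the paper at the final step, where you check the invariance $F\left( A_{\infty }\right) \subseteq A_{\infty }$ and invoke the Kakutani--Fan theorem, details the paper leaves implicit.
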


\begin{proof}
\label{prf1}Let the iteration $W_{0}=W$ and $\ W_{n}=\overline{conv}\left(
NA_{n-1}\right) .$ Obviously $\left( A_{n}\right) $ is a sequence of
nonempty closed, bounded and convex subsets of $A.$

It is clear that $\left( A_{n}\right) _{n}$ is decreasing.

Then, by using the properties of the measure of noncompactness, we get%
\begin{eqnarray*}
\mu \left( W_{n}\right) &=&\mu \left( conv\left( NW_{n-1}\right) \right) \\
&\leqslant &\mu \left( conv\left( NW_{n-1}\right) \right) =\mu \left(
NW_{n-1}\right) \\
&\leqslant &\mu \left( N\left( conv\left( NW_{n-2}\right) \right) \right) \\
&\leqslant &\mu \left( N\left( conv\left( NW_{n-2}\right) \right) \right) \\
&\leqslant &\mu \left( N^{2}\left( W_{n-2}\right) \right)
\end{eqnarray*}%
Repeating this process many times we get%
\begin{equation*}
\mu \left( W_{n}\right) \leqslant \mu \left( N^{n}\left( W_{0}\right)
\right) .
\end{equation*}%
Using Inequality \ref{ineq2}. we get $\mu \left( W_{n}\right) \leqslant \mu
\left( N^{n}\left( W_{0}\right) \right) \leqslant k_{n}\mu \left(
W_{0}\right) .$

By taking the limit, we get $\lim\limits_{n\rightarrow \infty }\mu \left(
W_{n}\right) =0,$ which implies that $W_{\infty }$ is compact. Hence $N$ has
at least one fixed point in $W_{\infty }\subset A.$
\end{proof}

\section{Application to Evolution differential inclusions with nonlocal
condition}

The multi-valued fixed point theorems of this paper can have some nice
applications to differential and integral inclusions as an example we choose
to provide an application for Theorem \ref{meir multi}. One can notice that
other applications can be given by changing the contractive condition which
the mappings is supposed to satisfy.

Let following evolution differential inclusions with nonlocal conditions%
\begin{eqnarray}
y^{\prime }(t) &\in &A(t)y(t)+F(t,y\left( t\right) ),\ \ t\in J:=[0,+\infty )
\label{ev1} \\
y(0) &=&\varphi (y),  \label{evcnd}
\end{eqnarray}

where $F$ is an upper Caratheodory multimap, $\varphi :C\left( J,X\right)
\rightarrow X$ is a given $X$-valued function. $\left\{ A\left( t\right)
:t\in J\right\} $ is a family of linear closed unbounded operators on $X$
with domain $D(A(t))$ independent of $t$ that generate an evolution system
of operators $\left\{ U\left( t,s\right) :t,s\in \Delta \right\} $ with $%
\Delta =\left\{ \left( t,s\right) \in J\times J:0\leqslant s\leqslant
t<\infty \right\} $.

The main work for this section is to study the existence of mild solutions
for this non-local inclusion.

Before we start studying this problem we recall some concepts and results
that will be needed through the section.

Define the set%
\begin{equation*}
S_{F}\left( y\right) =\left\{ f\in L^{1}\left( J,X\right) :f\left( t\right)
\in F\left( t,y\left( t\right) \right) \right\} .
\end{equation*}

\begin{definition}
A mapping $F:J\times C(J,X)\longrightarrow \mathcal{P}_{cp,cv}(X)$ is said
to be an upper Carath\'{e}odory multivalued map if it satisfies,

\begin{itemize}
\item[(i)] $x\mapsto F(t,x)$ is upper semi-continuous (with respect to the
metric $H_{d}$) for almost all $t\in J$.

\item[(ii)] $t\mapsto F(t,x)$ is measurable for each $x\in C(J,X)$.
\end{itemize}
\end{definition}

\begin{definition}
A family $\{U(t,s)\}_{(t,s)\in \Delta }$ of bounded linear operators $%
U(t,s):X\rightarrow X$ where $(t,s)\in \Delta :=\{(t,s)\in J\times
J:0\leqslant s\leqslant t<+\infty \}$ is called en evolution system if the
following properties are satisfied,

\begin{enumerate}
\item $U(t,t)=I$ where $I$ is the identity operator in $X$ and $U(t,s)\
U(s,\tau )=U(t,\tau )$ for $0\leqslant \tau \leqslant s\leqslant t<+\infty $,

\item The mapping $(t,s)\rightarrow U(t,s)\ y$ is strongly continuous, that
is, there exists a constant $C>0$ such that 
\begin{equation*}
\Vert U(t,s)\Vert \leqslant M\ \ \text{for any }(t,s)\in \Delta .
\end{equation*}
\end{enumerate}
\end{definition}

An evolution system $U(t,s)$ is said to be compact if $U(t,s)$ is compact
for any $t-s>0$. $U(t,s)$ is said to be equicontinuous if $\left\{
U(t,s)x:x\in M\right\} $ is equicontinuous at $0\leqslant s<t\leqslant b$
for any bounded subset $M\subset X$. Clearly, if $U(t,s)$ is a compact
evolution system, it must be equicontinuous. The inverse is not necessarily
true.

More details on evolution systems and their properties could be found on the
books of Ahmed \cite{Ah}, Engel and Nagel \cite{EnNa} and Pazy \cite{Pa}.

\begin{definition}
We say that the function $y(t)\in C\left( J,X\right) $ is a mild solution of
the evolution system $($\ref{ev1}$)-($\ref{evcnd}$)$ if it satisfies the
following integral equation 
\begin{equation}
y(t)=U(t,0)\ \varphi (y)+\int_{0}^{t}U(t,s)\ f(s)\ ds,  \label{evmild}
\end{equation}%
for all $t\in \mathbb{R}_{+}$ and $f\in S_{F}\left( y\right) .$
\end{definition}

Assume the following hypothesis which are needed thereafter :

\begin{itemize}
\item[$\left( H1\right) $] $\left\{ A\left( t\right) :t\in J\right\} $ is a
family of linear operators. $A\left( t\right) :D\left( A\right) \subset
X\rightarrow X$ generates an equicontinuous evolution system $\left\{
U\left( t,s\right) :\left( t,s\right) \in \Delta \right\} $ and%
\begin{equation*}
\left\vert U\left( t,s\right) \right\vert \leqslant M.
\end{equation*}
\end{itemize}

\begin{itemize}
\item[$(H2)$] The multifunction $F:J\times C(J;X)\longrightarrow \mathcal{P}%
_{cl,cv}(X)$ is an upper Carath\'{e}odory and $\varphi :C(J;X)\rightarrow X$
is continuous, if we have for any $\varepsilon >0$ there exists $\delta >0$
such that%
\begin{equation*}
\varepsilon \leqslant \mu \left( W\right) <\varepsilon +\delta \text{ for
any bounded }W\subset A
\end{equation*}%
implies 
\begin{equation*}
\mu \left( \varphi \left( W\right) \right) <\frac{\varepsilon }{2M}\text{
and }\mu \left( F\left( t,W\right) \right) <\frac{\varepsilon }{2Mt}\text{
for any }t\in J.
\end{equation*}

\item[$\left( H3\right) $] There exists a constant $r>0$ such that%
\begin{equation*}
M\left[ \left\Vert \varphi \left( y\right) \right\Vert +\left\{ \left\Vert
f\left( t\right) \right\Vert _{1}:f\in S_{F}\left( y\right) ,y\in
A_{0}\right\} \right] \leqslant r
\end{equation*}%
where, $A_{0}=\left\{ y\in C(J;X):\left\Vert y\left( t\right) \right\Vert
\leqslant r\text{ for all }t\in J\right\} $.
\end{itemize}

\begin{theorem}
Under the assumptions $\left( H1\right) -\left( H3\right) $ the non local
problem $\left( \text{\ref{evmild}}\right) -\left( \text{\ref{evcnd}}\right) 
$ has at least one mild solution in the space $C\left( J,X\right) $.
\end{theorem}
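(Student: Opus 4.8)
The plan is to recast the existence of a mild solution as a multivalued fixed point problem and then apply the Meir--Keeler type result, Theorem \ref{meir multi}. First I would introduce the solution operator $N:C(J,X)\to\mathcal{P}(C(J,X))$ by
\[
N(y)=\left\{\, h\in C(J,X): h(t)=U(t,0)\varphi(y)+\int_{0}^{t} U(t,s)f(s)\,ds,\ f\in S_{F}(y)\,\right\},
\]
so that $y$ is a mild solution of (\ref{ev1})--(\ref{evcnd}) precisely when $y\in N(y)$. I would then verify that $A_{0}$ from $(H3)$ is a nonempty, closed, bounded, convex subset of $C(J,X)$ and that $N$ maps $A_{0}$ into $\mathcal{P}_{cl,cv}(A_{0})$. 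The self-map property is exactly the a priori bound $(H3)$ together with $|U(t,s)|\le M$ from $(H1)$, since for $h\in N(y)$ one obtains $\|h(t)\|\le M[\|\varphi(y)\|+\|f\|_{1}]\le r$; convexity of $N(y)$ follows from the convexity of the values of $F$ (hence of $S_{F}(y)$) and from the fact that $f\mapsto U(t,0)\varphi(y)+\int_{0}^{t}U(t,s)f(s)\,ds$ is affine, while closedness of $N(y)$ is a routine consequence of the closedness of $S_{F}(y)$ and the continuity of the integral operator.

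The delicate hypothesis of Theorem \ref{meir multi} is upper semicontinuity, which I would establish in two stages. Using the equicontinuity of the evolution system granted by $(H1)$, I would first show that $N(A_{0})$ is equicontinuous: for $t_{1}<t_{2}$ the difference $h(t_{2})-h(t_{1})$ splits into the three standard terms coming from $U(t_{2},0)-U(t_{1},0)$, from $\int_{t_{1}}^{t_{2}}$, and from $\int_{0}^{t_{1}}(U(t_{2},s)-U(t_{1},s))$, each controlled by equicontinuity and the absolute continuity of the integral. Next, writing $\Phi(f)(t)=\int_{0}^{t}U(t,s)f(s)\,ds$, which is linear and continuous from $L^{1}(J,X)$ into $C(J,X)$ because $\|\Phi(f)\|\le M\|f\|_{1}$, I would invoke Lemma \ref{graph} (the Lasota--Opial closed graph lemma) to conclude that $\Phi\circ S_{F}$ has closed graph; since $y\mapsto U(\cdot,0)\varphi(y)$ is continuous by $(H2)$, the operator $N$ itself has a closed graph. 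Combining the equicontinuity with the closed graph, and with the relative compactness on the relevant sets supplied by the measure-of-noncompactness estimate below, the closed-graph/u.s.c.\ equivalence recalled in the preliminaries upgrades the closed graph of $N$ to upper semicontinuity.

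Finally I would verify the Meir--Keeler condition for $N$. Since $NW$ is equicontinuous, Lemma \ref{equimnclem} gives $\mu(NW)=\sup_{t\in J}\mu((NW)(t))$, and the same lemma together with subadditivity and the bound $\mu(TB)\le|T|\,\mu(B)$ for bounded linear $T$ yields, for bounded $W\subset A_{0}$,
\[
\mu\big((NW)(t)\big)\le M\,\mu(\varphi(W))+M\int_{0}^{t}\mu\big(F(s,W(s))\big)\,ds.
\]
Assuming $\varepsilon\le\mu(W)<\varepsilon+\delta$, hypothesis $(H2)$ bounds the first term by $M\cdot\frac{\varepsilon}{2M}=\frac{\varepsilon}{2}$ and the second by
\[
M\int_{0}^{t}\frac{\varepsilon}{2Mt}\,ds=\frac{\varepsilon}{2},
\]
so that $\mu((NW)(t))<\varepsilon$ for every $t$ and hence $\mu(NW)<\varepsilon$; this is exactly the hypothesis of Theorem \ref{meir multi}, which then produces a fixed point $y\in N(y)$, that is, a mild solution. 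I expect the upper semicontinuity step to be the real obstacle: without compactness of $A_{0}$ one cannot read off u.s.c.\ from the closed graph directly, so the argument must thread the closed-graph property through the equicontinuity and the $\mu$-contraction so that the closed-graph/u.s.c.\ equivalence applies on the compact limiting set generated by the Darbo-type iteration. A secondary technical point is the treatment of the unbounded interval $J=[0,+\infty)$, where the sup-norm and the Arzel\`{a}--Ascoli argument should be organized on compact subintervals together with a diagonalization.
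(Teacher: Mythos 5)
Your proposal follows essentially the same route as the paper: the same solution operator $N$ on the same set $A_{0}$, the self-map bound from $(H1)$--$(H3)$, equicontinuity of the integral family, the closed graph via Lemma \ref{graph}, the measure-of-noncompactness estimate $\mu(NW(t))\leqslant M\mu(\varphi(W))+Mt\,\mu(F(t,W(t)))\leqslant\varepsilon$ from $(H2)$, and the conclusion by Theorem \ref{meir multi}. If anything, you are more candid than the paper about the two weak points (passing from closed graph to upper semicontinuity without compactness, and the strictness/uniformity in $t$ of the $\varepsilon/2Mt$ bound), which the paper's proof silently glosses over.
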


\textbf{Proof.} To solve problem $($\ref{e1}$)-($\ref{e2}$)$ we transform it
to the following fixed-point problem.

Consider the multivalued operator $N:C(J;X)\rightarrow \mathcal{P}(C(J;X))$
defined by, 
\begin{equation*}
N(y)=\left\{ h\in C(J;X):h(t)=U(t,0)\varphi (y)+\int_{0}^{t}U(t,s)\ f(s)\ ds,%
\text{ with }f\in S_{F}\left( y\right) \right\} .
\end{equation*}

We can notice that fixed points of the operator $N$ are mild solutions of
problem $($\ref{evmild}$)-($\ref{evcnd}$)$.

Clearly for each $y\in C([-r,+\infty );X)$, the set $S_{F}\left( y\right) $
is nonempty since, by $(H2)$, $F$ has a measurable selection (see \cite{CaVa}%
).

To prove that $N$ has a fixed point , we need to satisfy all the conditions
of one of above theorems, for example let choose Theorem \ref{generali}.

Let $A_{0}=\left\{ y\in C(J;X):\left\Vert y\left( t\right) \right\Vert
\leqslant r\text{ for all }t\in J\right\} $. Obviously, $A_{0}$ is closed,
bounded and convex.

To show that $NA_{0}\subseteq A_{0}$, we need first to prove that the family 
\begin{equation*}
\left\{ \int_{0}^{t}U(t,s)\ f(s)\ ds\ :f\in S_{F}\left( y\right) \text{ and }%
y\in A_{0}\right\} 
\end{equation*}
is equicontinuous for $t\in J$ that is all the functions are continuous and
they have equal variation over a given neighborhood.

In view of $\left( H1\right) $ we have that functions in the set $\left\{
U\left( t,s\right) :\left( t,s\right) \in \Delta \right\} $ are
equicontinuous, (i,e) for every $\varepsilon >0$ there exists $\delta >0$
such that $\left\vert t-\tau \right\vert <\delta $ implies $\left\Vert
U\left( t,s\right) -u\left( \tau ,s\right) \right\Vert <\varepsilon $ for
all $U\left( t,s\right) \in \left\{ U\left( t,s\right) :\left( t,s\right)
\in \Delta \right\} $

Then, given some $\varepsilon >0$ let $\delta =\frac{\varepsilon ^{\prime }}{%
\varepsilon \left\Vert f\right\Vert _{\infty }}$ such that $\left\vert
t-\tau \right\vert <\delta $, we have%
\begin{equation*}
\left\vert \int_{0}^{t}U(t,s)\ f(s)\ ds-\int_{0}^{\tau }U(\tau ,s)\ f(s)\
ds\right\vert \leqslant \int_{\tau }^{t}\left\vert U\left( t,s\right)
-U\left( \tau ,s\right) \right\vert \left\vert f\left( s\right) \right\vert
ds.
\end{equation*}%
Regarding the fact that $\left\{ U\left( t,s\right) :\left( t,s\right) \in
\Delta \right\} $ is equicontinuous then%
\begin{eqnarray*}
\left\vert \int_{0}^{t}U(t,s)\ f(s)\ ds-\int_{0}^{\tau }U(\tau ,s)\ f(s)\
ds\right\vert &\leqslant &\varepsilon \left\Vert f\right\Vert _{\infty
}\left\vert t-\tau \right\vert \\
&<&\varepsilon \left\Vert f\right\Vert _{\infty }\frac{\varepsilon ^{\prime }%
}{\varepsilon \left\Vert f\right\Vert _{\infty }}=\varepsilon ^{\prime }.
\end{eqnarray*}%
Hence we conclude that $\left\{ \int_{0}^{t}U(t,s)\ f(s)\ ds\ :f\in
S_{F}\left( y\right) \text{ and }y\in A_{0}\right\} $ is equicontinuous for $%
t\in J$.

Now, let show that $NA_{0}\subseteq A_{0}$. Let for $t\in J$,%
\begin{eqnarray*}
\left\vert h\left( t\right) \right\vert &=&\left\vert U\left( t,0\right)
\varphi (y)+\int_{0}^{t}U(t,s)\ f(s)\ ds\right\vert \\
&\leqslant &\left\vert U\left( t,0\right) \varphi (y)\right\vert
+\int_{0}^{t}\left\vert U(t,s)\ f(s)\right\vert ds \\
&\leqslant &M\left\Vert \varphi \left( y\right) \right\Vert +M\left\Vert
f\right\Vert _{1} \\
&=&M\left[ \left\Vert \varphi \left( y\right) \right\Vert +\left\Vert
f\right\Vert _{1}\right] \leqslant r,
\end{eqnarray*}%
thus $NA_{0}\subseteq A_{0}$.

In further it is easy to see that $N$ has convex valued.

Now let show that $N$ has a closed graph, let $y_{n}\rightarrow y$ and $%
h_{n}\rightarrow h$ such that $h_{n}\left( t\right) \in N\left( y_{n}\right) 
$ and let show that $h\left( t\right) \in N\left( y\right) .$

Then, there exists a sequence $f_{n}\in S_{F}\left( y_{n}\right) $ such that%
\begin{equation*}
h_{n}\left( t\right) =U\left( t,0\right) \varphi
(y_{n})+\int_{0}^{t}U(t,s)f_{n}(s)ds.
\end{equation*}%
Consider the linear operator $\Phi :L^{1}\left( J;X\right) \rightarrow
C\left( J;X\right) $ defined by 
\begin{equation*}
\Phi f\left( t\right) =\int_{0}^{t}U(t,s)f_{n}(s)ds.
\end{equation*}%
Clearly, $\Phi $ is linear and continuous. Then from Lemma \ref{graph}. we
get that $\Phi \circ S_{F}\left( y\right) $ is a closed graph operator. In
further, we have%
\begin{equation*}
h_{n}\left( \cdot \right) -U\left( t,0\right) \varphi (y_{n})\in \Phi \circ
S_{F,y}.
\end{equation*}%
Since $y_{n}\rightarrow y$ and $h_{n}\rightarrow h,$ then%
\begin{equation*}
h\left( \cdot \right) -U\left( t,0\right) \varphi (y_{n})\in \Phi \circ
S_{F,y}.
\end{equation*}%
That is, there exists a function $f\in S_{F}\left( y\right) $ such that%
\begin{equation*}
h\left( t\right) =U\left( t,0\right) \varphi (y)+\int_{0}^{t}U(t,s)f(s)ds.
\end{equation*}%
Therefore $N$ has a closed graph, hence $N$ has closed values on $C\left(
J;X\right) $.

Let $W$ be a bounded subset of $A$ such that 
\begin{equation*}
\varepsilon \leqslant \mu \left( W\right) <\varepsilon +\delta \text{.}
\end{equation*}%
We know that the family $\left\{ \int_{0}^{t}U(t,s)f(s)ds,f\in S_{F}\left(
W\left( t\right) \right) \right\} $ is equicontinuous, hence by Lemma \ref%
{equimnclem}, we have%
\begin{eqnarray*}
\mu \left( \int_{0}^{t}U(t,s)f(s)ds,\text{ }f\in S_{F}\left( W\left(
t\right) \right) \right) &\leqslant &\int_{0}^{t}\mu \left( U(t,s)f(s),\text{
}f\in S_{F}\left( W\left( t\right) \right) \right) ds \\
&\leqslant &M\int_{0}^{t}\mu \left( f(s),\text{ }f\in S_{F}\left( W\left(
t\right) \right) \right) ds \\
&\leqslant &Mt\mu \left( F\left( t,W\left( t\right) \right) \right) .
\end{eqnarray*}%
Therefore%
\begin{eqnarray*}
\mu \left( NW\right) &=&\mu N\left( U\left( t,0\right) \varphi (W\left(
t\right) )+\int_{0}^{t}U(t,s)f(s)ds,\text{ }f\in S_{F}\left( W\left(
t\right) \right) \right) \\
&\leqslant &\mu \left( U\left( t,0\right) \varphi (W\left( t\right) )\right)
+\mu \left( \int_{0}^{t}U(t,s)f(s)ds,\text{ }f\in S_{F}\left( W\left(
t\right) \right) \right) \\
&\leqslant &M\mu \left( \varphi (W\left( t\right) )\right) +Mt\mu \left(
F\left( t,W\left( t\right) \right) \right) .
\end{eqnarray*}%
In view of $\left( H2\right) $, we get%
\begin{equation*}
\mu \left( NW\left( t\right) \right) \leqslant M\frac{\varepsilon }{2M}+Mt%
\frac{\varepsilon }{2Mt}=\varepsilon .
\end{equation*}%
Therefore, for $\varepsilon \leqslant \mu <\varepsilon +\delta $ we obtained 
$\mu \left( NW\left( t\right) \right) \leqslant \varepsilon .$ Thus
regarding Theorem \ref{meir multi}, $N$ has at least one fixed point, hence
the problem $\left( \text{\ref{evmild}}\right) -\left( \text{\ref{evcnd}}%
\right) $.


\begin{thebibliography}{99}
\bibitem{Ah} N.U. Ahmed, \textit{Semigroup Theory with Applications to
Systems and Control}, Harlow John Wiley \& Sons Inc. New York, 1991.

\bibitem{akhmerov} P. R. Akmerov, M. I. Kamenski, A. S. Potapov, A. E.
Rodkina, B. N. Sadovskii, \textit{Measures of Noncompactness and Condensing
Operators}, Birkhauser-Verlag,Basel, 1992.

\bibitem{BaGo} J. Banas, K. Goebel, \textit{Measure of Noncompactness in
Banach Spaces}, Marcel Dekker Inc., New York (1980).

\bibitem{CaVa} C. Castaing, M. Valadier, \textit{Convex Analysis and
Measurable Multifunction. Lecture Notes in Math. 580}, Springer-Verlag New
York, 1977.

\bibitem{De} K. Deimling, \textit{Multivalued Differential Equations},
Walter de Gruyter Berlin-New York, 1992.

\bibitem{dhage} B. C. Dhage, \textit{Some generalizations of Multivalued
version of Schauder's Fixed Point Theorem with Applications}, CUBO A
Mathematical Journal, \textbf{12} (2010), no. 3, 139-151.

\bibitem{EnNa} K.J. Engel, R. Nagel, \textit{One-Parameter Semigroups for
Linear Evolution Equations}, Springer-Verlag New York, 2000.

\bibitem{Fan} K. Fan, \textit{Fixed-point and minimax theorems in locally
convex topological linear spaces}, Proc. Nat. Acad. Sei. U.S.A. \textbf{38}
(1952), 121-126.

\bibitem{Go} L. G\'{o}rniewicz, \textit{Topological Fixed Point Theory of
Multivalued Mappings. Mathematics and its Applications 495}, Kluwer Academic
Publishers Dordrecht, 1999.

\bibitem{KaObZec} M. Kamenskii, V. Obukhovskii, P. Zecca, \textit{Condensing
Multivalued Maps and Semilinear Differential Inclusions in Banach Spaces},
Walter de Gruyter, Berlin-New York, 2001.

\bibitem{Ki} M. Kisielewicz, \textit{Differential Inclusions and Optimal
Control}, Kluwer Dordrecht The Netherlands, 1991.

\bibitem{meirkeeler} A. Meir, E. Keeler, \textit{A theorem on contraction
mappings}, J. Math. Anal. Appl. \textbf{28} (1969), 326-329.

\bibitem{pazy} A. Pazy. \textit{Semigroups of Linear Operators and
Applications to Partial Di erential Equations}. Springer-Verlag New York,
1983.

\bibitem{samgha} A. Samadia, M. B. Ghaemia, \textit{An Extension of Darbo
Fixed Point Theorem and its Applications to Coupled Fixed Point and Integral
Equations}, Filomat, \textbf{28} (2014), no. 4\textbf{, }879-886.

\bibitem{Thi} H. Thiems, \textit{Integrated semigroup and integral solutions
to abstract Cauchy problem}, J. Math. Anal. Appl., 152 (1990), 416-447.
\end{thebibliography}
\end{document}